\newtheorem{thm}{Theorem}[section]
\newtheorem{pro}[thm]{Proposition}
\newtheorem{rmk}[thm]{Remark}
\newtheorem{defi}[thm]{Definition}
\numberwithin{equation}{section}
\newcommand{\be }{\begin{equation}}
\newcommand{\ee }{\end{equation}}
\newcommand{\br}[1]{   [ \cdot,    \cdot  ]   }
\newcommand {\emptycomment}[1]{}
\def\<{\langle}
\def\>{\rangle}
\begin{document}

\title{\bf  The solenoidal Heisenberg Virasoro algebra and its simple weight modules}

\author{\bf B. Agrebaoui, W. Mhiri}
\author{{ Boujemaa Agrebaoui$^{1}$
 \footnote{E-mail: b.agreba@fss.rnu.tn}~ and \  Walid Mhiri$^{1}$ \footnote{E-mail: mhiriw1@gmail.com}
\
}\\
 \\
{\small 1.~University of Sfax, Faculty of Sciences Sfax,  BP
1171, 3038 Sfax, Tunisia}}

\date{}

\maketitle

\begin{abstract}
Let $A_n=\mathbb{C}[t_i^{\pm1},~1\leq i\leq n]$ and $\mathbf{W}(n)_\mu=A_nd_\mu$ the solenoidal Lie algebra introduced by Y.Billig and V.Futorny in \cite{BiFu2}, where $\mu=(\mu_1,\ldots,\mu_n)\in\mathbb{C}^n$ is a generic vector and $$d_\mu=\sum_{i=1}^n\mu_it_i\frac{\partial}{\partial t_i}.$$ We consider the semi-direct product Lie algebra $\mathbf{WA}(n)_\mu:=\mathbf{W}(n)_\mu\ltimes A_n$.

In the first part, We prove that $\mathbf{WA}(n)_\mu$ has a unique three-dimensional  universal central extension. In fact we construct a higher rank Heisenberg-Virasoro algebra (see \cite{LiuGuo, LdZ}). It will be denoted by $\mathbf{HVir}(n)_\mu$ and it will be called the solenoidal  Heisenberg-Virasoro algebra. Then we will study Harish-Chandra modules of $\mathbf{HVir}(n)_\mu$ following \cite{LiuGuo}. We will obtain two classes of Harich-Chandra modules: generalized highest weight modules(\textbf{GHW} modules) and intermediate series modules. Our results are particular cases of \cite{LiuGuo}.
In the end, we will construct  $\mathbf{HVir}(n)_\mu$ Verma modules  using the lexicographic order on $\mathbb{Z}^{n}$. In particular we give examples of irreducible weight modules  which have infinite dimensional weight spaces.
\end{abstract}

\textbf{Key words}: Heisenberg-Virasoro algebra, solenoidal algebra, solenoidal Heisenberg Virasoro algebra, central extension, Harish-Chandra modules, cuspidal modules

\textbf{Mathematics Subject Classification} (2010): 17B10,17B20,17B68,17B86.

\numberwithin{equation}{section}

\tableofcontents

\section{Introduction}
The Heisenberg-Virasoro algebra $\mathbf{HVir}$ was first introduced in \cite{ACKP}, where highest weight modules were studied and a
determinant formula for the Shapovalov form on Verma modules was obtained. In \cite{LuZhao} (see also\cite{LiuJiang},\cite{DLD}), Lu and Zhao classified the irreducible Harish-Chandra modules over $\mathbf{HVir}$, which turn out to be modules of intermediate series and highest/lowest weight modules. Whittaker modules for $\mathbf{HVir}$ were studied by \cite{LWZ}. Recently, a large class of irreducible non-weight modules were constructed in \cite{CG}.
The generalized Heisenberg Virasoro algebras are generalization of the Heisenberg-Virasoro algebras where the grading by $\mathbb{Z}$ is replaced by an additive subgroup $G$ of $\mathbb{C}$. Their representation theory was considered by several authors, see for example \cite{LdZ,SJS}.

Recently in \cite{BiFu, BiFu1}, Y. Billig and V. Futorny  study  weight modules of finite weight spaces of the Lie algebra $\mathbf{W}(n)$ of vector fields on the torus. They prove that such modules are highest modules or quotients of modules of tensor fields. In \cite{BiFu2}, they introduced so called solenoidal Lie algebra $\mathbf{W}(n)_\mu:=A_nd_\mu$ as a bridge between the Lie algebra $\mathbf{W}(1)$ and the Lie algebra $\mathbf{W}(n)$
where $\mu=(\mu_1,\ldots\mu_n)$ is a generic element in $\mathbb{C}^n$ and $d_\mu=\displaystyle\sum_{i=1}^n\mu_it_i\frac{\partial}{\partial t_i}$.  Then they give a classification of the simple cuspidal $\mathbf{W}(n)_{\mu}$-modules. In a forthcoming paper (see \cite{AM}), we compute the second cohomology space $H^2(\mathbf{W}(n)_{\mu},\mathbb{C})$. The universal central extension of $\mathbf{W}(n)_{\mu}$ is a new generalization of the Virasoro  algebra, denoted $\mathbf{Vir}(n)_\mu$ and is called the solenoidal-Virasoro algebra.
Then we give a complete classification of its Harish-Chandre modules.

In this paper we consider the semi-direct product $\mathbf{WA}(n)_\mu:=\mathbf{W}(n)_\mu\ltimes A_n$, the analogue of the Lie algebra $\mathbf{WA}(1)=\mathbf{W}(1)\ltimes A_1$ in the case $n=1$. The \textbf{first section} of this paper contains our main result given by Theorem \ref{Ext2}. We compute three generating $2$-cocycles and then we classify the universal central extension of $\mathbf{WA}(n)_\mu$. The obtained three-dimensional central extension of $\mathbf{WA}(n)_\mu$ is called the solenoidal Heisenberg-Virasoro  algebra and is denoted by $\mathbf{H}\mathbf{Vir}(n)_\mu$.  In the \textbf{second section}, we study Harish-Chandra modules over $\mathbf{H}\mathbf{Vir}(n)_\mu.$
In \cite{LiuGuo}, G.Liu and X.Guo give the definition of
generalized Heisenberg-Virasoro algebras $\mathbf{HVir}[G]$ where $G$ is an additive subgroup of $\mathbb{C}$. When $G\simeq\mathbb{Z}^n, \mathbf{HVir}[G]$
is called rank $n$ Heisenberg-Virasoro algebra. Our algebra $\mathbf{H}\mathbf{Vir}(n)_\mu$ is an example of  rank $n$ Heisenberg-Virasoro algebra.

In the second section, following \cite{LiuGuo}, we classify Harish-Chandra modules of $\mathbf{H}\mathbf{Vir}(n)_\mu$. We obtain tow kinds of modules, generalized highest weight modules (\textbf{GHW} modules) or intermediate series modules. For $n=1$, we obtain the classification results for the classical Heisenberg-Virasoro algebra given by R. L\"{u} and K. Zhao (see  \cite{LuZhao}).

In the\textbf{ third section}, we introduce a triangular decomposition of $\mathbf{H}\mathbf{Vir}(n)_\mu$ using the lexicographic order on $\mathbb{Z}^n$, then we define Verma modules and anti-Verma modules. As the usual highest weight theory, we obtain irreducible highest weight modules and irreducible lowest weight modules of $\mathbf{H}\mathbf{Vir}(n)_{\mu}$ by taking respectively quotients of  Verma modules and anti-Verma modules. In the end, we provide that these modules have infinite dimensional weight spaces.

\section{The solenoidal Heisenberg-Virasoro algebra $\mathbf{HVir}(n)_{\mu}$}
Let $A_n=\mathbb{C}[t_{i}^{\pm1},~1\leq i\leq n]$ be the algebra of Laurent polynomials and let  $\mu=(\mu_{1},\ldots,\mu_{n})\in\mathbb{C}^{n}$ generic, that is, for all $\alpha=(\alpha_1,\ldots,\alpha_n)\in \mathbb{Z}^n$ , $\mu\cdot\alpha:=\displaystyle\sum_{i=1}^n\mu_i\alpha_i\neq 0$. Let $d_{\mu}:=\displaystyle\sum_{i=1}^{n}\mu_{i}D_{t_{i}},$ where $D_{t_{i}}=t_{i}\frac{\partial}{\partial t_{i}}$. Y. Billig and V. Futorny \cite{BiFu2}, introduced the  solenoidal-Witt Lie algebra  $\mathbf{W}(n)_{\mu}:=A_nd_{\mu}$ as the Lie subalgebra of the Lie algebra $\mathbf{W}(n)=Der(A_n).$
 Let $$\Gamma_\mu=\{\mu\cdot\alpha;\alpha\in\mathbb{Z}^{n}\}.$$ It is the image of $\mathbb{Z}^{n}$ by the map : $$\begin{array}{cc}
                                                                     \sigma_\mu:& \mathbb{Z}^{n}\longrightarrow  \mathbb{C} \\
                                                                      &\alpha\mapsto  \mu\cdot\alpha
                                                                      \end{array}$$
$\Gamma_\mu$ is a subgroup of $(\mathbb{C},+).$
A canonical basis of $\mathbf{W}(n)_\mu$ is given by: $$\{e_{\mu\cdot\alpha}:=t^{\alpha}d_{\mu},\mu\cdot\alpha\in\Gamma_{\mu}\}.$$
 The commutators of the $e_{\mu\cdot\alpha}$  are given by:
\begin{equation}\label{soloWitt}[e_{\mu\cdot\alpha},e_{\mu\cdot\beta}]=\mu\cdot(\beta-\alpha)e_{\mu\cdot(\alpha+\beta)},~~\mu\cdot\alpha,\mu\cdot\beta\in \Gamma_{\mu}.\end{equation}
In the case of $n=1$,  we take $\mu \in \mathbb{C}^*$ then $\Gamma_{\mu}=\mu\mathbb{Z}$ and $\mathbf{W}(n)_\mu$ is isomorphic to $\mathbf{W}(1)$ by taking $d_m\rightarrow \gamma d_m$ where $\gamma$ is the square root of $\mu$ . In particular if $\mu =1$ we obtain the classical Witt algebra  $\mathbf{W}(1)$.

In the recent paper (see \cite{AM}) , we study the central extension of the solenoidal Lie algebra $\mathbf{W}(n)_\mu$ introduced by Y. Billig and V.Futorny (see \cite{BiFu2}), we obtain an analogue of the Virasoro algebra and we  called it the solenoidal Virasoro algebra and we denoted it by $\mathbf{Vir}(n)_\mu$. Then we give a classification of Harish-Chandra modules over $\mathbf{Vir}(n)_\mu$. Also, we construct $\mathbf{Vir}(n)_\mu$-modules with infinite dimensional weight spaces by using the lexicographic order on $\mathbb{Z}^{n}$.

In this paper we consider the Lie algebra $\mathbf{WA}(n)_\mu:=\mathbf{W}(n)_{\mu}\ltimes A_n$. Its canonical basis is:
$$\{e_{\mu\cdot\alpha}=t^{\alpha}d_{\mu}, h_\alpha=t^{\alpha},\mu\cdot\alpha\in\Gamma_\mu,~\alpha\in\mathbb{Z}^{n}\}$$
Its Lie structure generated by the following brackets:
$$[e_{\mu\cdot\alpha},e_{\mu\cdot\beta}]=\mu\cdot(\beta-\alpha)e_{\mu\cdot(\alpha+\beta)}.$$
$$[h_{\alpha},h_{\beta}]=0.$$
$$[e_{\mu\cdot\alpha},h_\beta]=(\mu\cdot\beta)h_{\alpha+\beta}.$$

The main purpose of this paper is to compute central extensions of the algbera $\mathbf{WA}(n)_\mu$.

The following theorem is a generalization to multidimensional case of Theorem 3 and Proposition 3 in  \cite{OvRo}  where the extension of the Lie algebra $Vect(S^1)$ of vector fields on the circle by modules of tensor densities $\mathcal{F}_\lambda$ is study.
\begin{thm}\label{Ext2} The second cohomology space $H^2(\mathbf{WA}(n)_{\mu},\mathbb{C})$ is three dimensional and it is generated by the following $2$-cocycles
$C_{\mu,1},~C_{\mu,2},~C_{\mu,3}:\mathbf{WA}(n)_{\mu}\times\mathbf{WA}(n)_{\mu} \longrightarrow \mathbb{C}$ defined by:

\begin{equation}\label{cocyc1}\left\{\begin{array}{ll} C_{\mu,1}(e_{\mu\cdot\alpha},e_{\mu\cdot\beta}):=\delta_{\alpha,-\beta}\frac{(\mu\cdot\alpha)^{3}-(\mu\cdot\alpha)}{12}c_{\mu,1}\\ 0\hbox{ otherwise}\end{array}\right.,\end{equation}

\begin{equation}\label{cocycl3}\left\{\begin{array}{ll}
C_{\mu,2}(e_{\mu\cdot\alpha},h_\beta):=\delta_{\alpha,-\beta}((\mu\cdot\alpha)^{2}-(\mu\cdot\alpha))c_{\mu,2}\\  0\hbox{ otherwise}\end{array}\right.,
\end{equation}
\begin{equation}\label{cocycl2}\left\{\begin{array}{ll}
C_{\mu,3}(h_\alpha,h_\beta):=\delta_{\alpha,-\beta}\frac{(\mu\cdot\alpha)}{3}c_{\mu,3} \\ 0\hbox{ otherwise}\end{array}\right.,\end{equation}

\end{thm}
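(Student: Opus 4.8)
The plan is to compute $H^2(\mathbf{WA}(n)_\mu,\mathbb{C})$ directly by exploiting the $\Gamma_\mu$-grading of the Lie algebra $\mathbf{WA}(n)_\mu$, exactly as one does for the Virasoro algebra. First I would observe that $\mathbf{WA}(n)_\mu=\bigoplus_{\gamma\in\Gamma_\mu}\mathfrak{g}_\gamma$ where $\mathfrak{g}_0=\mathbb{C}e_0\oplus\mathbb{C}h_0$ and $\mathfrak{g}_\gamma=\bigoplus_{\mu\cdot\alpha=\gamma}(\mathbb{C}e_{\mu\cdot\alpha}\oplus\mathbb{C}h_\alpha)$ for $\gamma\neq 0$ (note that since $\mu$ is generic the map $\alpha\mapsto\mu\cdot\alpha$ is injective on $\mathbb{Z}^n$, so each graded piece is at most two-dimensional, spanned by one $e$ and one $h$). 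A standard averaging argument using the adjoint action of $e_0$ shows that every cohomology class has a representative cocycle $C$ that is \emph{homogeneous of degree $0$}, i.e. $C(\mathfrak{g}_\gamma,\mathfrak{g}_\delta)=0$ unless $\gamma+\delta=0$. So one only needs to analyze $C$ on pairs $(x_\alpha,y_{-\alpha})$, which reduces everything to three scalar-valued functions on $\mathbb{Z}^n$: $f(\alpha):=C(e_{\mu\cdot\alpha},e_{-\mu\cdot\alpha})$, $g(\alpha):=C(e_{\mu\cdot\alpha},h_{-\alpha})$ and, for the $A_n$-block, $C(h_\alpha,h_{-\alpha})$ — and also $C(e_{\mu\cdot\alpha},h_\beta)$ with $\alpha+\beta=0$ packaged into $g$.

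Next I would write down the cocycle identity $C([x,y],z)+C([y,z],x)+C([z,x],y)=0$ on homogeneous triples whose degrees sum to zero, splitting into cases according to how many of $x,y,z$ lie in the $\mathbf{W}(n)_\mu$-part versus the $A_n$-part. The $(e,e,e)$ case reproduces the classical Virasoro functional equation $(\mu\cdot\beta-\mu\cdot\alpha)f(\alpha+\beta)+\cdots=0$, whose solutions modulo coboundaries are exactly the one-dimensional span of $(\mu\cdot\alpha)^3-(\mu\cdot\alpha)$ — this is where I would cite the $Vect(S^1)$ computation of \cite{OvRo}, transported along $\sigma_\mu$. The $(e,e,h)$ case constrains $g$; using $[e_{\mu\cdot\alpha},h_\beta]=(\mu\cdot\beta)h_{\alpha+\beta}$ one gets a functional equation for $g$ whose general solution modulo coboundaries is spanned by $(\mu\cdot\alpha)^2-(\mu\cdot\alpha)$ (the shift by the linear term again being absorbable into a coboundary, accounting for the $-(\mu\cdot\alpha)$ correction). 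The $(e,h,h)$ case constrains $C(h_\alpha,h_\beta)$ and forces it to be proportional to $\mu\cdot\alpha$ — this is the "Heisenberg" current-algebra cocycle $\langle t^\alpha,t^\beta\rangle$-type term; the normalization $\tfrac{\mu\cdot\alpha}{3}$ is a choice. The $(h,h,h)$ case is vacuous since $A_n$ is abelian.

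Then I would verify that each of $C_{\mu,1},C_{\mu,2},C_{\mu,3}$ is indeed a cocycle (a routine check of the Jacobi/cocycle identity, using $\delta_{\alpha,-\beta}$ and the cubic/quadratic/linear polynomials above) and that no nontrivial linear combination is a coboundary (a coboundary $\partial\phi(x,y)=-\phi([x,y])$ on a homogeneous pair $(x_\alpha,x_{-\alpha})$ is determined by $\phi$ on $\mathfrak{g}_0$, so it can only produce the lower-order terms we already quotiented out — concretely, $C_{\mu,1}$ restricted to $\{e_{\mu\cdot\alpha}\}$ is detected by its leading $(\mu\cdot\alpha)^3$ term, $C_{\mu,2}$ by its $(\mu\cdot\alpha)^2$ term, and $C_{\mu,3}$ by its value on the $h$-block, so the three classes are visibly independent). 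Finally I would assemble these to conclude $\dim H^2=3$ with the stated generators. The main obstacle I anticipate is the bookkeeping in the reduction to degree-zero cocycles and, more substantively, showing that the "mixed" cocycle $g$ has no exotic solutions beyond the quadratic one — i.e. solving the functional equation for $C(e_{\mu\cdot\alpha},h_\beta)$ completely and checking that the solution space is genuinely one-dimensional modulo coboundaries rather than, say, absorbing an extra free parameter coming from the interplay between the $e$- and $h$-gradings; handling the degenerate sub-cases (when one of the indices is $0$, or when $\mu\cdot\alpha$ takes special values) is where care is needed, though genericity of $\mu$ rules out the truly pathological resonances.
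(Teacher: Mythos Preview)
Your proposal is correct and follows essentially the same strategy as the paper: reduce to degree-zero cocycles using the grading by $\Gamma_\mu$, split the cocycle into its $(e,e)$, $(e,h)$ and $(h,h)$ components, derive a functional equation for each from the cocycle identity on appropriate triples, and then identify the one-parameter solution space modulo coboundaries in each case.

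The technical execution differs somewhat. Where you invoke the standard averaging argument via $\mathrm{ad}\,e_0$ to pass to a degree-zero representative, the paper does this by an explicit change of basis $e'_{\mu\cdot\alpha}=e_{\mu\cdot\alpha}+\frac{\Psi(0,\mu\cdot\alpha)}{\mu\cdot\alpha}c$ followed by the Jacobi identity with $e_{\mu\cdot 0}$. More notably, where you would solve the resulting functional equations algebraically (or by citing \cite{OvRo} for the Virasoro block), the paper treats the unknowns $\theta_i$ as functions on $\Gamma_\mu\subset\mathbb{C}$ and appeals to the functional-equation literature: Kannappan--Riedel--Sahoo \cite{KaRiSa} for the cubic $\theta_1$, the Acz\'el equation \cite{Az} for the quadratic $\theta_2$, and for $\theta_3$ the paper even passes to a limit $Y\to X$ to obtain a differential equation. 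Your purely algebraic route is arguably cleaner in the discrete setting of $\Gamma_\mu\cong\mathbb{Z}^n$, since the continuity hypotheses the paper invokes are not obviously available; on the other hand, the paper's approach makes the polynomial form of each $\theta_i$ emerge very explicitly. Either way the answer is the same, and your identification of the $(e,e,h)$ block as the place requiring the most care matches where the paper spends the most effort.
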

\begin{proof}
The fact that the $2$-cochains $$C_{\mu,1},~C_{\mu,2},~C_{\mu,3}:\mathbf{WA}(n)_{\mu}\times\mathbf{WA}(n)_{\mu} \longrightarrow \mathbb{C}$$ are $2$-cocycles
 is a straight forward computations using the $2$-cocycle condition:
\begin{equation}\label{CC1}C_{\mu,i}([X,Y],Z)+C_{\mu,i}([Y,Z],X)+C_{\mu,i}([Z,X],Y)=0,\end{equation}
For $i=1,2,3;~X,Y,Z\in \mathbf{WA}(n)_{\mu}$.

 Let us now prove the unicity of the $2$-cocycles  $C_{\mu,1},~C_{\mu,2},~C_{\mu,3}$.

 Denote $X_{\alpha,1}=e_{\mu\cdot\alpha}$ and $X_{\alpha,2}= h_{\alpha}$. The first step, we prove that for $i\in\{1,2,3\}$ and $j,k\in\{1,2\}$ each cocycle has the following form: $$C_{\mu,i}(X_{\alpha,j},X_{\beta,k})=\delta_{i,j+k-1}\delta_{\alpha,-\beta}\theta_i(\mu\cdot\alpha)c_{\mu,i}, \hbox{ for all }\alpha,\beta\in \mathbb{Z}^n.$$
The second step, we apply known results on functional equations (see \cite{KaRiSa}, \cite{Az}) to give the final expressions.

Take $X=X_{\alpha,j}, Y=X_{\beta,k}$ and $Z=X_{\gamma,l}$.
Since condition (\ref{CC1}) is cyclic in $X, Y, Z$, it suffices to take $(j,k,l)\in \{(1,1,1),(1,1,2),(1,2,2)\}$ corresponding respectively to $\{C_{\mu,1},~C_{\mu,2},~C_{\mu,3}\}$ since the left hand side in condition (\ref{CC1}) is equal to zero for the other possibilities.

 Let us start by proving the unicity of $C_{\mu,1}$. So we take $(j,k,l)=(1,1,1)$, that is $(X_{\alpha,1},X_{\beta,1},X_{\gamma,1})=(e_{\mu\cdot\alpha},e_{\mu\cdot\beta},e_{\mu\cdot\gamma})$.
Assume that there exists $\Psi_1:\Gamma_{\mu}\times\Gamma_{\mu}\rightarrow \mathbb{C}$
such that:
\begin{equation}\label{bracketsolVir}[e_{\mu\cdot\alpha},e_{\mu\cdot\beta}]_{HVir_{\mu}}=(\mu\cdot\beta-\mu\cdot\alpha)e_{\mu\cdot(\alpha+\beta)}+\Psi_1(\mu\cdot\alpha,\mu\cdot\beta)c_{\mu,1}.\end{equation}

The function $\Psi_1(\mu\cdot\alpha,\mu\cdot\beta)$ can not be chosen arbitrary because of the anti-commutativity of the bracket and of the Jacobi identity.
We observe from (\ref{bracketsolVir}) that if we put: $$e'_{\mu\cdot0}=e_{\mu\cdot0},e'_{\mu\cdot\alpha}=e_{\mu\cdot\alpha}+\frac{\Psi_1(0,\mu\cdot\alpha)}{\mu\cdot\alpha}c_{\mu,1},~~ (\alpha\neq\overrightarrow{0}),$$
then we will have $$[e'_{\mu\cdot0},e'_{\mu\cdot\alpha}]_{HVir_{\mu}}=(\mu\cdot\alpha)e'_{\mu\cdot\alpha} \hbox{ for all }\mu\cdot\alpha\in\Gamma_\mu.$$
This transformation is merely a change of basis and we can drop the prime and say that:
\begin{equation}\label{ident2}[e_{\mu\cdot0},e_{\mu\cdot\alpha}]_{HVir_{\mu}}=(\mu\cdot\alpha)e_{\mu\cdot\alpha}\hbox{ for all }\mu\cdot\alpha\in\Gamma_\mu.\end{equation}
From the Jacobi identity for $e_{\mu\cdot0},e_{\mu\cdot\alpha},e_{\mu\cdot\beta}$ we get
\begin{equation}\label{ident3}[e_{\mu\cdot0},[e_{\mu\cdot\beta},e_{\mu\cdot\alpha}]_{HVir_{\mu}}]_{HVir_{\mu}}=\mu\cdot(\beta+\alpha)[e_{\mu\cdot\beta},e_{\mu\cdot\alpha}]_{HVir_{\mu}}\end{equation}
Substituting (\ref{bracketsolVir}) in (\ref{ident3}) and using (\ref{ident2}) we get: $$\mu\cdot(\alpha+\beta)\Psi_1(\mu\cdot\alpha,\mu\cdot\beta)c_{\mu,1}=0.$$
But this is equivalent to $\alpha+\beta =\overrightarrow{0}$ or $\Psi_1(\mu\cdot\alpha,\mu\cdot\beta)=0$. Then $\Psi_1$ has the following form: \begin{equation} \label{exptheta}\Psi_1(\mu\cdot\alpha,\mu\cdot\beta)=\delta_{\alpha,-\beta}\theta_1(\mu\cdot\alpha)\end{equation}
 where $\theta_1$ is a function from $\Gamma_{\mu}$ to $\mathbb{C}$.

The Lie bracket  (\ref{bracketsolVir}) becomes:
\begin{equation}\label{ident4}[e_{\mu\cdot\alpha},e_{\mu\cdot\beta}]_{HVir_{\mu}}=(\mu\cdot\beta-\mu\cdot\alpha)e_{\mu\cdot(\alpha+\beta)}+\delta_{\alpha,-\beta}\theta_1(\mu\cdot\alpha)c_{\mu,1},~\mu\cdot\alpha,\mu\cdot\beta\in~\Gamma_\mu.\end{equation}
By antisymmetry of the bracket, we deduce that $\theta_1$ is an odd function ($\theta_1(\mu\cdot\alpha)=-\theta_1(-\mu\cdot\alpha)$) and by bilinearity of the bracket, we deduce that $\theta_1$ is additive. So, $\theta_1$ is a group morphism from $(\Gamma_{\mu},+)$ to $(\mathbb{C},+)$.

We now work out the  $2$-cocycle condition (\ref{CC1}) on $C_{\mu,1}$ for $e_{\mu\cdot\gamma},e_{\mu\cdot\alpha},e_{\mu\cdot\beta}$. If $\gamma+\beta+\alpha\neq \overrightarrow{0}$ then (\ref{CC1}) is satisfied. If $\gamma+\beta+\alpha=\overrightarrow{0}$, using (\ref{ident4}) and the the fact that $\theta_1$ is odd, we get from (\ref{CC1}) the following equation:
\begin{equation}\label{ident5}\mu\cdot(\alpha-\beta)\theta_1(\mu\cdot(\alpha+\beta))-\mu\cdot(2\beta+\alpha)\theta_1(\mu\cdot\alpha)+\mu\cdot(\beta+2\alpha)\theta_1(\mu\cdot\beta)=0\end{equation}
where $\theta_1$ is a continuous function.
Substituting $\beta$ by $-\beta$ in (\ref{ident5}) we obtain the following equation:
\begin{equation}\label{ident6}\mu\cdot(\alpha+\beta)\theta_1(\mu\cdot(\alpha-\beta))-\mu\cdot(\alpha-2\beta)\theta_1(\mu\cdot\alpha)-\mu\cdot(2\alpha-\beta)\theta_1(\mu\cdot\beta)=0\end{equation}
by adding (\ref{ident5}) and (\ref{ident6}) we get:
\begin{equation}\label{ident8}(\mu\cdot\alpha)[\theta_1(\mu\cdot(\alpha+\beta))+\theta_1(\mu\cdot(\alpha-\beta))-2\theta_1(\mu\cdot\alpha)]=(\mu\cdot\beta)[\theta_1(\mu\cdot(\alpha+\beta))+\theta_1(\mu\cdot(\beta-\alpha))-2\theta_1(\mu\cdot\beta)]\end{equation}
Let us denoted $x:=\mu\cdot\alpha$ and $y:=\mu\cdot\beta$ and replace them in (\ref{ident8}) we will obtain:
\begin{equation}\label{ident9}x[\theta_1(x+y)+\theta_1(x-y)-2\theta_1(x)]=y[\theta_1(x+y)-\theta_1(x-y)-2\theta_1(y)]\end{equation}
But (\ref{ident9}) is equivalent to the following equation:
\begin{equation}\label{ident10}2x\theta_1(x)-2y\theta_1(y)=(x-y)\theta_1(x+y)+(x+y)\theta_1(x-y).\end{equation}

Using results on functional equations by PL.Kannappan,T.Riedel and P.K.Sahoo (see \cite{KaRiSa}), the equation (\ref{ident10})
has the following general solution: $$\theta_1(x)=ax^{3}+A(x)$$ where $A:\mathbb{C}\mapsto\mathbb{C}$ is an additive function. Since we work with continuous function $\theta_1$, then $A$ will be continuous and additive function, and so it is a linear function $A(x)= bx, b\in \mathbb{C}$.

 Finally, $\theta_1(x)=ax^{3}+bx$ where $a,b~\in~\mathbb{C}$ and for $x=\mu\cdot\alpha$ we have: $$\theta_1(\mu\cdot\alpha)= a(\mu\cdot\alpha)^{3}+b(\mu\cdot\alpha) .$$
The $2$-cocycle $\theta_1$ is non trivial if and only if  $a\neq0$ while $b$ can be chosen arbitrary. By the convention taken in Virasoro $2$-cocycle ( $n=1$ ), the choice $a=-b=\frac{1}{12}$ and the generating $2$-cocycle becomes:
\begin{equation}\label{SVirC}C_{\mu,1}(e_{\mu\cdot\alpha},e_{\mu\cdot\beta})=\delta_{\alpha,-\beta}\theta_1(\mu\cdot\alpha)c_{\mu,1}=\frac{(\mu\cdot\alpha)^{3}-\mu\cdot\alpha}{12}\delta_{\alpha,-\beta}c_{\mu,1}.\end{equation}

For the unicity of the $2$-cocycle $C_{\mu,2}$, we take $(j,k,l)=(1,1,2)$.
Assume that there exists $\Psi_2:\Gamma_\mu\times\Gamma_\mu\rightarrow \mathbb{C}$
such that:
\begin{equation}\label{crochetSVir2}[e_{\mu\cdot\alpha},h_{\beta}]_{HVir_{\mu}}=(\mu\cdot\beta)h_{\alpha+\beta}+\Psi_2(\mu\cdot\alpha,\mu\cdot\beta)c_{\mu,2}.\end{equation}

The function $\Psi_2(\mu\cdot\alpha,\mu\cdot\beta)$ can not be chosen arbitrary because of the anti-commutativity of the bracket and of the Jacobi identity.
We observe from  (\ref{crochetSVir2}) that if we put: $$e'_{\mu\cdot0}=e_{\mu\cdot0},~h'_{\alpha}=h_{\alpha}+\frac{\Psi_2(0,\mu\cdot\alpha)}{\mu\cdot\alpha}c_{\mu,2},~~ (\alpha\neq\overrightarrow{0}),$$
then we will have $$[e'_{\mu\cdot0},h'_{\alpha}]_{HVir_{\mu}}=(\mu\cdot\alpha)h'_{\alpha} \hbox{ for all }\alpha\in\mathbb{Z}^{n}.$$
This transformation is merely a change of basis and we can drop the prime and say that:
\begin{equation}\label{ident22}[e_{\mu\cdot0},h_{\alpha}]_{HVir_{\mu}}=(\mu\cdot\alpha)h_{\alpha}\hbox{ for all }\alpha\in\mathbb{Z}^{n}\end{equation}
From the Jacobi identity for $e_{\mu\cdot0},e_{\mu\cdot\alpha},h_{\beta}$, we get:
\begin{equation}\label{ident32}[e_{\mu\cdot0},[e_{\mu\cdot\alpha},h_{\beta}]_{HVir_{\mu}}]_{HVir_{\mu}}=\mu\cdot(\beta+\alpha)[e_{\mu\cdot\alpha},h_{\beta}]_{HVir_{\mu}}\end{equation}
Substituting (\ref{crochetSVir2}) in (\ref{ident32}) and using (\ref{ident22}) we get: $$\mu\cdot(\alpha+\beta)\Psi_2(\mu\cdot\alpha,\mu\cdot\beta)c_{\mu,2}=0.$$
But this is equivalent to $\alpha+\beta =\overrightarrow{0}$ or $\Psi_2(\mu\cdot\alpha,\mu\cdot\beta)=0$. Then $\Psi_2$ has the following form: \begin{equation} \label{exptheta2}\Psi_2(\mu\cdot\alpha,\mu\cdot\beta)=\delta_{\alpha,-\beta}\theta_2(\mu\cdot\alpha)\end{equation}
 where $\theta_2$ is a function from $\Gamma_{\mu}$ to $\mathbb{C}$.

We now work out  the  $2$-cocycle condition on $C_{\mu,2}$ for $(X_{\alpha,1},X_{\beta,1},X_{\gamma,2})=(e_{\mu\cdot\alpha},e_{\mu\cdot\beta},h_\gamma)$. If $\gamma+\beta+\alpha\neq \overrightarrow{0}$ then (\ref{CC1}) is satisfied. If $\gamma+\beta+\alpha=\overrightarrow{0}$, using (\ref{exptheta}) and the the fact that $\theta_2$ is odd, we get from (\ref{CC1}) the following equation:

$$(\mu\cdot\beta-\mu\cdot\alpha)\theta_2(\mu\cdot(\alpha+\beta))-(\mu\cdot(\alpha+\beta))\theta_2(\mu\cdot\beta)+(\mu\cdot\beta+\mu\cdot\alpha)\theta_2(\mu\cdot\alpha)=0$$
Put $x=\mu\cdot\alpha$ and $y=\mu\cdot\beta$, then we will obtain:
\begin{equation}\label{ef1}(y-x)\theta_2(x+y)=(y+x)(\theta_2(y)-\theta_2(x))\end{equation}
If $x=y$ or $x=-y$ the equation (\ref{ef1}) is satisfied.
If $x\neq y$ and $x\neq -y$, then (\ref{ef1}) is equivalent to the following equation:
\begin{equation}\label{ef2}\frac{\theta_2(x+y)}{x+y}=\frac{\theta_2(x)-\theta_2(y)}{x-y}\end{equation}

If $x\neq 0$, put $h(x)=\frac{\theta_2(2x)}{2x}$,~so we have:
\begin{equation}\label{ef3}\frac{\theta_2(x)-\theta_2(y)}{x-y}=h(\frac{x+y}{2})\end{equation}
This is the well known Acz\'{e}l functional equation (see \cite{Az}). Its general solution is given by:
 $$\theta_2(x)=ax^{2}+bx+c, \hbox{ for } a,b,c~\in~\mathbb{R}$$  and $h$ is $C^1$-function such that $h(x)=\theta_2'(x)$. But in our case $\theta_2(0)=0$ then $c=0$ and $\theta_2$ becomes:
$$\theta_2(x)=ax^{2}+bx~\forall~a,b~\in\mathbb{R}.$$

Following the choice of the $2$-cocycle in the twisted Heisenberg-Virasoro algebra corresponding to one variable ( $n=1$ ), we take  $a=1$ and $b=-1$ then we obtain: $$\theta_2(\mu\cdot\alpha)=(\mu\cdot\alpha)^{2}-\mu\cdot\alpha.$$

For the unicity of the $2$-cocycle $C_{\mu,3}$, we take $(j,k,l)=(1,2,2).$
Assume that there exists $\Psi_3:\Gamma_\mu\times\Gamma_\mu\rightarrow\in \mathbb{C}$ such that:
\begin{equation}\label{crochetSVir3}[h_{\alpha},h_{\beta}]_{HVir_{\mu}}=\Psi_3(\mu\cdot\alpha,\mu\cdot\beta)c_{\mu,3}.\end{equation}

From the Jacobi identity for $e_{\mu\cdot0},h_{\alpha},h_{\beta}$ we get:
$$\mu\cdot(\alpha+\beta)\Psi_3(\mu\cdot\alpha,\mu\cdot\beta)c_{\mu,3}=0.$$
But this is equivalent to $\alpha+\beta =\overrightarrow{0}$ or $\Psi_3(\mu\cdot\alpha,\mu\cdot\beta)=0$. Then $\Psi_3$ has the following form: \begin{equation} \label{exptheta3}\Psi_3(\mu\cdot\alpha,\mu\cdot\beta)=\delta_{\alpha,-\beta}\theta_3(\mu\cdot\alpha)\end{equation}
 where $\theta_3$ is a function from $\Gamma_{\mu}$ to $\mathbb{C}$.
We will have:
$$[h_{\alpha},h_{\beta}]_{HVir}=\delta_{\alpha,-\beta}\theta_3(\mu\cdot\alpha)c_{\mu,3},$$
$\hbox{with}~\theta_3(0)=0,~~\theta_3(-\mu\cdot\alpha)=-\theta_3(\mu\cdot\alpha).$

Let $\alpha,\beta,\gamma\in\mathbb{Z}^{n}$ and $\alpha+\beta+\gamma=\overrightarrow{0}.$
We apply the  $2$-cocycle condition for $(X_{\alpha,1},X_{\beta,2},X_{\gamma,2})=(e_{\mu\cdot\alpha},h_{\beta},h_\gamma)$
we obtain the equation:
\begin{equation}\label{equ3}(\mu\cdot\beta)\theta_3(\mu\cdot(\alpha+\beta))-(\mu\cdot\alpha+\mu\cdot\beta)\theta_3(\mu\cdot\beta)=0.\end{equation}
If we put $x=\mu\cdot\alpha$ and $y=\mu\cdot\beta$, then (\ref{equ3}) becomes:
\begin{equation}\label{ef4}y\theta_3(x+y)=(y+x)\theta_3(y)\end{equation}

If $x=0$ or $y=0$ the equation (\ref{ef4}) is satisfies.\\
If $x\neq 0$ and $y\neq 0$, then (\ref{ef4}) is equivalent to:
\begin{equation}\label{ef5}\frac{\theta_3(x+y)-\theta_3(y)}{x}=\frac{\theta_3(y)}{y}.\end{equation}
Let $X=x+y,Y=y$, then (\ref{ef5}) becomes:
\begin{equation}\label{ef6}\frac{\theta_3(Y)-\theta_3(X)}{Y-X}=\frac{\theta_3(Y)}{Y}.\end{equation}

If~$Y$ approaches $X$( $Y \rightarrow X)$ in the first member of (\ref{ef6}), we obtain the following differential equation:
$$\theta'_3(X)=\frac{\theta_3(X)}{X}$$ which has solution $\theta_3(X)= aX, a\in \mathbb{C}$.

Following the choice of the $2$-cocycle in the twisted Heisenberg-Virasoro algebra corresponding to one variable ( $n=1$ ), we take  $a=1/3$ then we obtain
$$\theta_3(\mu\cdot\alpha)=\frac{\mu\cdot\alpha}{3}.$$

\end{proof}

\begin{defi}
The central extension of $\mathbf{WA}(n)_{\mu}$ given by the three $2$-cocycles $C_{\mu,1},~C_{\mu,2}$ and $C_{\mu,3}$ in Theorem \ref{Ext2} is called the solenoidal Heisenberg-Virasoro algebra $(\mathbf{HVir}(n)_{\mu},[.,.]_{HVir_{\mu}})$ where
$${\mathbf{HVir(n)_{\mu}}}:=\mathbf{WA}(n)_{\mu}\oplus \mathbb{C}c_{\mu,1}\oplus\mathbb{C}c_{\mu,2}\oplus \mathbb{C}c_{\mu,3}.$$
and where its Lie bracket $[.,.]_{HVir_{\mu}}$ is generated by the following brackets:
\begin{equation}\label{HSvir0}
[e_{\mu\cdot\alpha},e_{\mu\cdot\beta}]_{HVir_{\mu}}=\mu\cdot(\beta-\alpha)e_{\mu\cdot(\alpha+\beta)}+\delta_{\alpha,-\beta}\frac{(\mu\cdot\alpha)^{3}-(\mu\cdot\alpha)}{12}c_{\mu,1}\end{equation}
\begin{equation}\label{HSvir2}[e_{\mu\cdot\alpha},h_\beta]_{HVir_{\mu}}=(\mu\cdot\beta)h_{\alpha+\beta}+\delta_{\alpha,-\beta}((\mu\cdot\alpha)^{2}-(\mu\cdot\alpha))c_{\mu,2}\end{equation}
\begin{equation}\label{HSvir1}[h_\alpha,h_\beta]_{HVir_{\mu}}=\delta_{\alpha,-\beta}\frac{(\mu\cdot\alpha)}{3}c_{\mu,3}\end{equation}
\begin{equation}\label{HSvir3}[c_{\mu,i},\mathbf{HVir}(n)_{\mu}]_{HVir_{\mu}}=0 \hbox{ for all } i=1,2,3.\end{equation}
\end{defi}
\begin{rmk}
\begin{itemize}
\item[1)]The name solenoidal Heisenberg-Virasoro algebra comes from the facts that $\mathbf{HVir}(n)_{\mu}$ contains a subalgebra isomorphic to $\mathbf{Vir}(n)_{\mu}$ generated by $\{e_{\mu\cdot\alpha},c_{\mu,1}\mid \alpha\in\mathbb{Z}^n\}$ and a subalgebra $$\mathbf{H}(n)_\mu:=\displaystyle\big(\oplus_{\alpha\in \mathbb{Z}^n}\mathbb{C}h_{\alpha}\big)\oplus\mathbb{C}c_{\mu,2}$$ which is isomorphic to an infinite dimensional Heisenberg algebra graded by $\mathbb{Z}^n$.\\
\item[2)]For  a given $2$-cocycle  $C_\mu:\mathbf{WA}(n)_{\mu}\times \mathbf{WA}(n)_{\mu}\rightarrow \mathbb{C}$,  there exists $(a_1,a_2,a_3)\in \mathbb{C}^3$ such that $C_\mu=a_1C_{\mu,1}+a_2C_{\mu,2}+a_3C_{\mu,3}$.
By bilinearity its expression is given as following:

$$\displaystyle \begin{array}{lll}C_\mu((e_{\mu\cdot\alpha},h_\beta),(e_{\mu\cdot\gamma},h_\eta))=&a_1C_{\mu,1}(e_{\mu\cdot\alpha},e_{\mu\cdot\gamma})+\\&a_2
(C_{\mu,2}(e_{\mu\cdot\alpha},h_{\eta})-C_{\mu,2}(e_{\mu\cdot\gamma},h_{\beta}))+\\&a_3C_{\mu,3}(h_{\beta},h_{\eta})\end{array}$$
for all $\alpha,\beta,\gamma,\eta\in \mathbb{Z}^n$.

Moreover,  The Lie bracket of $\mathbf{HVir}(n)_{\mu}$ is given by:
$$[X,Y]_{HVir_{\mu}}= [X,Y]+ C_{\mu}(X,Y), \hbox{ for all } X,Y\in \mathbf{HVir}(n)_{\mu}.$$
\end{itemize}
\end{rmk}
\section{ Harish Chandra modules for ${\mathbf{HVir}(n)_\mu}$}
\subsection{Generalities on Harish-Chandra modules}
Let $V$ be a nonzero  ${\mathbf{HVir}(n)_\mu}$-module. Suppose that the central elements $c_{\mu,1}, c_{\mu,2} , c_{\mu,3} \hbox{ and } h_0$ act as scalars $c_1, c_2 , c_3 , F$ respectively, on $V$. Set
$$V_{\lambda} = \{v \in V |d_{\mu}v = \lambda v\},$$ which is called a weight space of weight $\lambda$. Then $V$ is
called a weight module if $V =\oplus_{\lambda\in\mathbb{C}}V_{\lambda}$. Denote $supp(V) = \{\lambda|V_{\lambda} \neq 0\}$,
which is called the support of $V.$
\begin{defi}
A weight $\mathbf{HVir}(n)_{\mu}$-module $V$  is called Harish-Chandra if dim $V_\lambda <\infty$ for all $\lambda\in supp(V )$ and is called uniformly
bounded or cuspidal if there is some $N \in \mathbb{N}$ such that $dim V_\lambda< N$ for all $\lambda\in supp(V )$.
\end{defi}
\begin{defi}
A weight ${\mathbf{HVir}(n)_\mu}$-module $V$ is called a module of the intermediate series if it is
indecomposable and all its weight spaces are at most one dimensional.
\end{defi}
\subsection{Intermediate series of $\mathbf{HVir}(n)_\mu$}
\begin{pro}
Let $T_{\mu}(a,b,F)$ the $\Gamma_\mu$-graded vector space:
$$T_{\mu}(a,b,F) =\oplus_{\mu\cdot\kappa\in\Gamma_\mu}v_{\mu\cdot\kappa+a}$$ where $a,b,F\in \mathbb{C}$. We define an action of $\mathbf{HVir}(n)_\mu$ on  $T_{\mu}(a,b,F)$ by:
\begin{equation} \label{seInt}\begin{array}{ccc}
e_{\mu\cdot\alpha}.v_{\mu\cdot\kappa+a}=(a+\mu\cdot\kappa+b(\mu\cdot\alpha))v_{\mu\cdot(\kappa+\alpha)+a},\\
h_{\alpha}.v_{\mu\cdot\kappa+a}=Fv_{\mu\cdot(\kappa+\alpha)+a},\\
c_{\mu,1}v_{\mu\cdot\kappa+a}=0,c_{\mu,2}v_{\mu\cdot\kappa+a}=0,c_{\mu,3}v_{\mu\cdot\kappa+a}=0\end{array}\end{equation}
for all $\kappa,\alpha\in\mathbb{Z}^{n}$.
Then $T_{\mu}(a,b,F)$ is a $\mathbf{HVir}(n)_\mu$-module for this action.
\end{pro}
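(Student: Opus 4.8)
The plan is to verify directly that the operators defined in \eqref{seInt} satisfy the defining relations of $\mathbf{HVir}(n)_\mu$, i.e. that $X.(Y.v)-Y.(X.v)=[X,Y]_{HVir_\mu}.v$ holds on every basis vector $v_{\mu\cdot\kappa+a}$ for each pair $X,Y$ of generators. Since $c_{\mu,1},c_{\mu,2},c_{\mu,3}$ act as $0$, they commute with all operators, so the relations \eqref{HSvir3} are immediate, and in each of \eqref{HSvir0}, \eqref{HSvir2}, \eqref{HSvir1} the central summand on the right-hand side contributes nothing. Thus only three families of identities remain to be checked. I would also note that every operator in \eqref{seInt} maps $v_{\mu\cdot\kappa+a}$ into $\mathbb{C}\,v_{\mu\cdot(\kappa+\alpha)+a}$, so that $T_\mu(a,b,F)$ is automatically a weight module with respect to $d_\mu=e_{\mu\cdot 0}$, with support contained in the coset $a+\Gamma_\mu$ (and with $d_\mu$ acting on $v_{\mu\cdot\kappa+a}$ by the scalar $a+\mu\cdot\kappa$, consistent with the indexing).

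For the bracket $[e_{\mu\cdot\alpha},e_{\mu\cdot\beta}]_{HVir_\mu}=\mu\cdot(\beta-\alpha)\,e_{\mu\cdot(\alpha+\beta)}$, I would set $x=\mu\cdot\alpha$, $y=\mu\cdot\beta$, $z=a+\mu\cdot\kappa$, so that a direct computation gives
$$e_{\mu\cdot\alpha}.\big(e_{\mu\cdot\beta}.v_{\mu\cdot\kappa+a}\big)-e_{\mu\cdot\beta}.\big(e_{\mu\cdot\alpha}.v_{\mu\cdot\kappa+a}\big)=\Big((z+by)(z+y+bx)-(z+bx)(z+x+by)\Big)v_{\mu\cdot(\kappa+\alpha+\beta)+a},$$
and expanding the scalar in brackets collapses to $(y-x)\big(z+b(x+y)\big)$. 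This is exactly $\mu\cdot(\beta-\alpha)$ times the eigenvalue of $e_{\mu\cdot(\alpha+\beta)}$ acting on $v_{\mu\cdot\kappa+a}$, so the first relation holds.

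The remaining two families are simpler. For $[e_{\mu\cdot\alpha},h_\beta]_{HVir_\mu}=(\mu\cdot\beta)h_{\alpha+\beta}$ I would compute $e_{\mu\cdot\alpha}.(h_\beta.v_{\mu\cdot\kappa+a})-h_\beta.(e_{\mu\cdot\alpha}.v_{\mu\cdot\kappa+a})=F\big((z+y+bx)-(z+bx)\big)v_{\mu\cdot(\kappa+\alpha+\beta)+a}=F(\mu\cdot\beta)v_{\mu\cdot(\kappa+\alpha+\beta)+a}$, which matches $(\mu\cdot\beta)h_{\alpha+\beta}.v_{\mu\cdot\kappa+a}$. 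Finally, since $h_\alpha$ and $h_\beta$ both act by $F$ times a pure shift, $h_\alpha.(h_\beta.v_{\mu\cdot\kappa+a})=h_\beta.(h_\alpha.v_{\mu\cdot\kappa+a})=F^2 v_{\mu\cdot(\kappa+\alpha+\beta)+a}$, so their commutator vanishes, in agreement with $[h_\alpha,h_\beta]_{HVir_\mu}=0$ on $T_\mu(a,b,F)$ (the $c_{\mu,3}$-term being killed). This exhausts all cases, so $T_\mu(a,b,F)$ is a module. The only place where any nontrivial algebra appears is the quadratic identity in the $(e,e)$ case, and even there it is a one-line expansion; I do not expect any genuine obstacle.
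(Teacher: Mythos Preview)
Your verification is correct and complete: the three families of commutator identities are exactly what must be checked, and the only nontrivial one, the $(e,e)$ case, reduces to the algebraic identity $(z+by)(z+y+bx)-(z+bx)(z+x+by)=(y-x)\big(z+b(x+y)\big)$, which you have computed correctly. The paper itself does not supply a proof for this proposition, treating it as a routine verification, so your direct check is precisely what is implicitly expected.
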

\begin{rmk}
The weight spaces of $T_{\mu}(a,b,F)$ are one dimensional. Then $T_{\mu}(a,b,F)$ are called cuspidal or intermediate series modules.
\end{rmk}
It is easy to check that the $\mathbf{HVir}(n)_\mu$-module $T_{\mu}(a, b, F)$ is reducible if and only if $F = 0,
a \in \Gamma_\mu \hbox{ and } b = 0, 1.$ The module $T_{\mu}(0, 0, 0)$ contains $\mathbb{C}v_0$ as a submodule and the quotient $T_{\mu}(0, 0, 0)/\mathbb{C}v_0$ is irreducible. The module $T_{\mu}(0, 1, 0)$ contains $\oplus_{\alpha\in \mathbb{Z}^n\setminus\{\overrightarrow{0}\}} \mathbb{C} v_{\mu\cdot\alpha}$ as
irreducible submodule of codimension one. By duality, it will be isomorphic to $T_{\mu}(0, 0, 0)/\mathbb{C}v_0$.  We will denote it $\overline{T}_{\mu}(0,0,0).$

Let V be a nontrivial irreducible weight $\mathbf{HVir}(n)_\mu$-module with weight multiplicity one.
We may assume that $h_0, c_{\mu,1}, c_{\mu,2}, c_{\mu,3}$ act as scalars $F,c_{1}, c_{2}, c_{3}$ respectively.

Following Lemma 3.1 and Lemma 3.2 in \cite{LuZhao}, we will prove the following proposition:
\begin{pro} \label{IntHVirn} Let $\mu=(\mu_1,\ldots,\mu_n)\in \mathbb{C}^n$ a generic element that is: $$\mu\cdot\alpha\neq 0,~\forall~\alpha=(\alpha_1,\ldots,\alpha_n)\in \mathbb{Z}^n\setminus\{\overrightarrow{0}\}.$$
Let  $V:=\displaystyle\oplus_{\mu\cdot\kappa\in \Gamma_\mu}\mathbb{C}v_{\mu\cdot\kappa}$ be a  $\mathbf{HVir}(n)_\mu$-module the action given by:
$$e_{\mu\cdot\alpha}.v_{\mu\cdot\kappa+a}=(a+\mu\cdot\kappa+b(\mu\cdot\alpha))v_{\mu\cdot(\kappa+\alpha)+a}.$$
$$h_{\alpha}.v_{\mu\cdot\kappa+a}=F_{\mu\cdot\alpha,\mu\cdot\kappa}v_{\mu\cdot(\kappa+\alpha)+a}\hbox{ and }c_{\mu,i}v_{\mu\cdot\kappa+a}=c_iv_{\mu\cdot\kappa+a}\hbox{ for } i\in\{1,2,3\}.$$

Then all $F_{\mu\cdot\alpha,\mu\cdot\kappa}$ are equal to a constant $F$ and $c_i=0$ for $i\in\{1,2,3\}$
and such module  $V$ is isomorphic to $T_{\mu}(a, b, F)$.
\end{pro}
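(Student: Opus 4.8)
The plan is to insert the three defining relations (\ref{HSvir0}), (\ref{HSvir2}) and (\ref{HSvir1}) of $\mathbf{HVir}(n)_{\mu}$ into $V$ and read off a system of scalar constraints on the unknowns $F_{\mu\cdot\alpha,\mu\cdot\kappa}$ and $c_1,c_2,c_3$. Applying (\ref{HSvir0}) with $\beta=-\alpha$ to $v_{\mu\cdot\kappa+a}$ and using the prescribed $\mathbf{W}(n)_{\mu}$-action, one gets $\tfrac{(\mu\cdot\alpha)^{3}-\mu\cdot\alpha}{12}\,c_1=0$ for every $\alpha$ --- this is exactly the computation already carried out for $\mathbf{Vir}(n)_{\mu}$ in \cite{AM} --- hence $c_1=0$ since $\Gamma_\mu$ is infinite. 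Moreover $h_0$ is central in $\mathbf{HVir}(n)_{\mu}$ (one checks $[e_{\mu\cdot\alpha},h_0]_{HVir_{\mu}}=0$ and $[h_\alpha,h_0]_{HVir_{\mu}}=0$), so on the irreducible module $V$ it acts by the scalar $F$; that is, $F_{0,\mu\cdot\kappa}=F$ for all $\kappa$. Applying (\ref{HSvir2}) to $v_{\mu\cdot\kappa+a}$ in the case $\alpha+\beta\neq\overrightarrow{0}$ produces
\begin{equation*}
(a+\mu\cdot\kappa+\mu\cdot\beta+b(\mu\cdot\alpha))\,F_{\mu\cdot\beta,\mu\cdot\kappa}-(a+\mu\cdot\kappa+b(\mu\cdot\alpha))\,F_{\mu\cdot\beta,\mu\cdot(\kappa+\alpha)}=(\mu\cdot\beta)\,F_{\mu\cdot(\alpha+\beta),\mu\cdot\kappa},
\end{equation*}
applying (\ref{HSvir1}) in the case $\alpha+\beta\neq\overrightarrow{0}$ gives $F_{\mu\cdot\beta,\mu\cdot\kappa}F_{\mu\cdot\alpha,\mu\cdot(\kappa+\beta)}=F_{\mu\cdot\alpha,\mu\cdot\kappa}F_{\mu\cdot\beta,\mu\cdot(\kappa+\alpha)}$, and the two cases $\alpha+\beta=\overrightarrow{0}$ of (\ref{HSvir2}) and (\ref{HSvir1}) yield the identities in which $F$, $c_2$ and $c_3$ appear.

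The heart of the argument is to prove that $F_{\mu\cdot\alpha,\mu\cdot\kappa}$ is independent of $\kappa$. The displayed identity expresses $F_{\mu\cdot\beta,\mu\cdot(\kappa+\alpha)}$ through $F_{\mu\cdot\beta,\mu\cdot\kappa}$, $F_{\mu\cdot(\alpha+\beta),\mu\cdot\kappa}$ and the scalar $a+\mu\cdot\kappa+b(\mu\cdot\alpha)$, which vanishes for at most one $\alpha$; iterating this recursion and combining it with the multiplicative identity coming from (\ref{HSvir1}), one argues precisely as in Lemma~3.1 and Lemma~3.2 of \cite{LuZhao} (the rank-one case $n=1$) that the $F_{\mu\cdot\alpha,\mu\cdot\kappa}$ do not depend on $\kappa$. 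Equivalently --- this being the point of view of Proposition~3 of \cite{OvRo} used before Theorem~\ref{Ext2} --- the operators $h_\alpha$ realise, up to the central corrections, a $\Gamma_\mu$-homogeneous morphism of $\mathbf{W}(n)_{\mu}$-modules from $A_n$ into $\End V$, and for the tensor-density module $V$ the space of such morphisms is one-dimensional. This $\kappa$-independence is the main obstacle; what precedes and what follows it is routine.

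Once $F_{\mu\cdot\alpha,\mu\cdot\kappa}=\theta(\mu\cdot\alpha)$ for some $\theta\colon\Gamma_\mu\to\mathbb{C}$, the case $\alpha+\beta\neq\overrightarrow{0}$ of (\ref{HSvir2}) collapses to $(\mu\cdot\beta)\theta(\mu\cdot\beta)=(\mu\cdot\beta)\theta(\mu\cdot(\alpha+\beta))$, so $\theta$ is constant, equal to some $\lambda$, on $\Gamma_\mu\setminus\{\overrightarrow{0}\}$. The case $\beta=-\alpha$ of (\ref{HSvir2}) becomes $-(\mu\cdot\alpha)\theta(-\mu\cdot\alpha)=-(\mu\cdot\alpha)F+((\mu\cdot\alpha)^{2}-\mu\cdot\alpha)c_2$, i.e. $\lambda=F-(\mu\cdot\alpha-1)c_2$ for every $\alpha\neq\overrightarrow{0}$; since $\Gamma_\mu$ is infinite this forces $c_2=0$ and $\lambda=F$, and with $\theta(0)=F$ we conclude $F_{\mu\cdot\alpha,\mu\cdot\kappa}\equiv F$. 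Finally the case $\beta=-\alpha$ of (\ref{HSvir1}) reads $0=\tfrac{\mu\cdot\alpha}{3}c_3$, so $c_3=0$ too. With $c_1=c_2=c_3=0$ and all $F_{\mu\cdot\alpha,\mu\cdot\kappa}$ equal to $F$, the $\mathbf{HVir}(n)_{\mu}$-action on $V$ coincides term by term with the action (\ref{seInt}) defining $T_{\mu}(a,b,F)$, so $v_{\mu\cdot\kappa+a}\mapsto v_{\mu\cdot\kappa+a}$ is the desired isomorphism.
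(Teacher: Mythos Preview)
Your argument is correct, but it follows a different route from the paper's. The paper proves the constancy of $F_{\mu\cdot\alpha,\mu\cdot\kappa}$ by \emph{induction on $n$}: the base case $n=1$ is Lemma~3.1 of \cite{LuZhao}, and the inductive step slices $\mathbb{Z}^n$ as $\mathbb{Z}^{n-1}\times\mathbb{Z}$, restricting to rank-$(n-1)$ Heisenberg subalgebras $\mathcal{H}_m(n-1)$ acting on the corresponding submodules $V_q$ so that the induction hypothesis shows independence from the first $n-1$ coordinates; one then swaps the roles of the last coordinate and the first $n-1$ to conclude. The vanishing of $c_3$ is obtained as a byproduct of the rank-one lemma applied to each slice, and $c_1=0$ is simply cited from \cite{ChPr,MP}.

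You instead work directly on $\Gamma_\mu$: you write out the scalar constraints coming from (\ref{HSvir0})--(\ref{HSvir1}), isolate the recursion for $F_{\mu\cdot\beta,\mu\cdot(\kappa+\alpha)}$, argue $\kappa$-independence, and then solve the remaining one-variable system to get $\theta\equiv F$ and $c_2=c_3=0$. This has the advantage of making the vanishing of $c_1,c_2,c_3$ completely explicit (the paper's proof is silent on $c_2$), and your direct verification of $c_1=0$ from the tensor-density action is cleaner than an appeal to \cite{ChPr,MP}. The trade-off is that your key step --- the $\kappa$-independence --- is handled by citing the \emph{rank-one} argument of \cite{LuZhao}; since that lemma is stated for $\mathbb{Z}$ rather than an arbitrary subgroup $\Gamma_\mu\subset\mathbb{C}$, you should either note that its proof goes through verbatim with $\mathbb{Z}$ replaced by $\Gamma_\mu$ (it does: the recursion you wrote is formally identical to the one solved there), or make your Ovsienko--Roger remark into the actual argument. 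The paper's induction sidesteps this issue by always reducing to the literal rank-one case.
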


\begin{proof} It is strait forward to prove that $c_1 = 0$ by restriction to $\mathbf{Vir}(n)_\mu$ and using results by \cite{ChPr,MP}.

 It is clear that $supp(V)\subset a + \Gamma_\mu \hbox{ for some } a \in \mathbb{C}$.
 We give a proof by induction on $n$ to prove that  $F_{\mu\cdot\alpha,\mu\cdot\kappa}=F$ for all $\alpha,\kappa\in \mathbb{Z}^n$.

For $n=1$, Proposition \ref{IntHVirn} is Lemma 3.1 in the paper \cite{LuZhao}.\\
Let us prove the case of $n=2$. Let $h_{(l,m)}=t_1^lt_2^m$ and let $\mathcal{H}(2)=\oplus_{(l,m)\in \mathbb{Z}^2}\mathbb{C}h_{(l,m)}\oplus \mathbb{C}c_{\mu,3}$ be the Heisenberg subalgebra of $\mathbf{HVir}(2)_\mu$ and let $V=\oplus_{(p,q)\in \mathbb{Z}^2}\mathbb{C}v_{\mu_1p+\mu_2q}$. Let us fix $l$ and $p$ and consider  $\mathcal{H}_l(1)=\oplus_{m\in \mathbb{Z}}\mathbb{C}h_{(l,m)}\oplus \mathbb{C}c_{\mu,3}$ and $V_p=\oplus_{q\in \mathbb{Z}}\mathbb{C}v_{\mu_1p+\mu_2q}$. Then  $\mathcal{H}_l(1)$ is a subalgebra of $\mathcal{H}(2)$ isomorphic to the  Heisenberg algebra $\mathcal{H}(1)$ and $V_p$ is an intermediate module for $\mathcal{H}_l(1)$. By  Lemma 3.1 in the paper \cite{LuZhao},
$h_{(l,m)}$ acts by a constant $F_{l,p}$ which depends on $l,p\in \mathbb{Z}$ but independent of $m$ and $q$ and $c_{\mu,3}$ act by zero on $V_p$ for all $p$. If we interchange $n$ by $m$ and $p$ by $q$, then $F_{l,p}$ will be independent of $l$ and $p$ and then it will be a constant $F$ for all $(l,m)\in \mathbb{Z}^2$   and $c_{\mu,3}$ act by zero on all $V$.

Now, assume that the proposition is true on $\mathbb{Z}^{n-1}$ where $n\in \mathbb{N}$ and $n\geq 2$. Let  $\mathcal{H}_m(n-1)=\oplus_{\alpha\in \mathbb{Z}^{n-1}}\mathbb{C}h_{(\alpha,m)}\oplus \mathbb{C}c_{\mu,3}$ and $V_q=\oplus_{\beta\in \mathbb{Z}^{n-1}}\mathbb{C}v_{\mu'\cdot\beta+\mu_nq}$ where $\mu'=(\mu_1,\ldots,\mu_{n-1})$. By the induction hypothesis  $\mathcal{H}_m(n-1)$ acts by a constant $F_{m,q}$ on $V_q$ which depends only on $m$ and $q$ for the moment and $c_{\mu,3}$ act by zero on $V_q$.
 Now if we fix $\alpha,\beta\in \mathbb{Z}^{n-1}$ and consider $\mathcal{H}_{\alpha}(1) =\oplus_{m\in \mathbb{Z}}\mathbb{C}h_{(\alpha,m)}\oplus \mathbb{C}c_{\mu,3}$
 and $V_\beta:= \oplus_{q\in \mathbb{Z}}\mathbb{C}v_{\mu'\cdot\beta+\mu_nq}$, then $F_{m,q}$ will be independent of $m$ and $q$ and then it will be a constant $F$ for all $(\alpha,m)\in \mathbb{Z}^n$.
\end{proof}
\subsection{Generalized highest weight modules}
For $\mu=(\mu_1,\mu_2,\ldots,\mu_n)\in \mathbb{C}^n$, let $\mu'=(\mu_2,\ldots,\mu_n)\in \mathbb{C}^{n-1}$. For any $\alpha=(\alpha_1,\ldots,\alpha_n)\in \mathbb{Z}^{n}$ we have $\mu\cdot\alpha=\mu_1\alpha_1+\mu'\cdot\alpha'$  where $\alpha'=(\alpha_2,\ldots,\alpha_n)$. This induces a natural embedding of $\Gamma_{\mu'}$ in $\Gamma_{\mu}$
given by $\mu'\cdot\alpha'\mapsto \mu\cdot(0,\alpha')$. The embedding $\Gamma_{\mu'}\hookrightarrow \Gamma_{\mu}$ as defined below, induces an embedding of the Lie algebra $\mathbf{HVir}(n-1)_{\mu'}$ into the Lie algebra $\mathbf{HVir}(n)_{\mu}$ given by:
 $$e_{\mu'\cdot\alpha'}\mapsto e_{\mu\cdot(0,\alpha')}  \hbox{ and }h_{\alpha'}\mapsto h_{(0,\alpha')} .$$

Let $A_{n-1}=\mathbb{C}[t_2^{\pm1},\ldots,t_{n}^{\pm1}]$, then we have the following $\mathbb{Z}$-grading of $\mathbf{HVir}(n)_{\mu}$:
$$\mathbf{HVir}(n)_{\mu}=\oplus_{i\in\mathbb{Z}}\mathbf{HVir}(n)_{\mu}^{i}$$ where $\mathbf{HVir}(n)_{\mu}^{0}=A_{n-1}d_{\mu}\oplus A_{n-1} \oplus \sum_{i=1}^{3}\mathbb{C}c_{\mu,i}$ and $\mathbf{HVir}(n)_{\mu}^{i}=t_1^{i}A_{n-1}d_{\mu}\oplus t_1^{i}A_{n-1}\oplus\sum_{i=1}^{3}\mathbb{C}c_{\mu,i}$ if $i\neq 0$.
The Lie subalgebra $\mathbf{HVir}(n)_{\mu}^{0}$ of $\mathbf{HVir}(n)_{\mu}$ is isomorphic to $\mathbf{HVir}(n-1)_{\mu'}$.
The algebra $\mathbf{HVir}(n)_{\mu}$ has a triangular decomposition $$\mathbf{HVir}(n)_{\mu}^{+}\oplus\mathbf{HVir}(n)_{\mu}^{0}\oplus\mathbf{HVir}(n)_{\mu}^-$$
where $\mathbf{HVir}(n)_{\mu}^{\pm}:=\oplus_{i\in\pm\mathbb{N}}\mathbf{HVir}(n)_{\mu}^{i}$.

For $a,b\in\mathbb{C}$, we denote $T_{\mu'}(a,b,F)$ the $\mathbf{HVir}(n)_{\mu}^{0}$ module of tensor fields $$T_{\mu'}(a,b,F)=\oplus_{\mu'\cdot\kappa'\in\Gamma_{\mu'}}\mathbb{C}v_{\mu'\cdot\kappa'}$$
subject to the action:
\begin{equation}\label{act1}\begin{array}{lll}e_{\mu'\cdot\alpha'}.v_{\mu'\cdot\kappa'}=(a+\mu'\cdot\kappa'+b(\mu'\cdot\alpha'))v_{\mu'.(\alpha'+\kappa')}, \\ h_{\alpha'}.v_{\mu'.\kappa'}=Fv_{\mu'\cdot(\kappa'+\alpha')},\\ c_{\mu,i}.v_{\mu'\cdot\kappa'}=0 \hbox{ for } i=1,2,3\hbox{ and } \mu'\cdot\kappa',\mu'\cdot\alpha'\in \Gamma_\mu'\cdot\end{array}\end{equation}

We extend the $\mathbf{HVir}(n)_{\mu}^{0}$ module structure on $T_{\mu'}(a,b,F)$ given by (\ref{act1}) to $\mathbf{HVir}(n)_{\mu}^{+}\oplus\mathbf{HVir}(n)_{\mu}^{0}$
where the elements of $\mathbf{HVir}(n)_{\mu}^{+}$ act by zero on $T_{\mu'}(a,b,F)$. Let
$$\widetilde{M}(a,b,\Gamma_{\mu'})=Ind^{\mathbf{HVir}(n)_{\mu}}_{\mathbf{HVir}(n)_{\mu}^{+}\oplus\mathbf{HVir}(n)_{\mu}^{0}}T_{\mu'}(a,b,F)$$
be the generalized Verma module. As vector spaces we have $\widetilde{M}(a,b,\Gamma_{\mu'})\cong U(\mathbf{HVir}(n)_{\mu}^{-})\otimes_{\mathbb{c}}T_{\mu'}(a,b,F).$
 The module $\widetilde{M}(a,b,\Gamma_{\mu'})$ has a unique maximal proper
submodule $\overline{M}(a,b,\Gamma_{\mu'})$ trivially intersecting  $T_{\mu'}(a,b,F)$. The quotient module
$$L(a,b,\Gamma_{\mu'}):=\widetilde{M}(a,b,\Gamma_{\mu'})/\overline{M}(a,b,\Gamma_{\mu'})$$
 is uniquely determined by the constants $a,b$ and $$L(a,b,\Gamma_{\mu'})=\oplus_{i>0} L_{a-i\mu_1+\Gamma_{\mu'}}$$
where
$L_{a-i\mu_1+\Gamma_{\mu'}} = \oplus_{\mu'\cdot\kappa\in\Gamma_{\mu'}}L_{a-i\mu_1+\mu'\cdot\kappa}$
and $$L_{a-i\mu_1+\mu'\cdot\kappa}=\{v\in L/d_{\mu}v=(a-i\mu_1+\mu'\cdot\kappa)v\}$$
We can similarly define $\widetilde{M}_{a+i\mu_1+\Gamma_{\mu'}}$ and $\widetilde{M}_{a-i\mu_1+\Gamma_{\mu'}}$.
\begin{defi}
 Let $(u_1,\ldots,u_n)$ be a $\mathbb{Z}$-basis of $\Gamma_\mu$ and let $\Gamma_\mu^{>0}:= \mathbb{Z}^+u_1\oplus\ldots\oplus\mathbb{Z}^+u_n$ and $\mathbf{HVir}(n)_{\mu}^{>0}:= \oplus_{u\in \Gamma_\mu^{>0}} (\mathbf{HVir}(n)_{\mu})_u.$ Let $V$ be a weight module such that there exists $\lambda_0\in Supp(V)$ and a nonzero vector $v_{\lambda_0}\in V_{\lambda_0}$ such that : $\mathbf{HVir}(n)_{\mu}^{>0}v_{\lambda_0}=0$. Then $V$ is said to be a generalized highest weight module
with generalized highest weight $\lambda_0$ and generalized highest weight vector $v_{\lambda_0}$. Such module $V$ is denoted by $V(\lambda_0)$.
\end{defi}
In G.Liu and X.Guo (see \cite{LiuGuo} Theorem.16)  , it is proved that for a generalized Heisenberg-Virasoro algebra an irreducible weight module with finite dimensional weight spaces is either a cuspidal or a generalized highest weight module. In our particular case, any irreducible $\mathbf{HVir}(n)_{\mu}$-module is either cuspidal or isomorphic to $L(a,b,\Gamma_{\mu'})$.
\begin{defi}
A  $\mathbf{HVir}(n)_{\mu}$-module $V$ is called a \textbf{dense} module if $supp(V)=a+\Gamma_\mu,~a\in \mathbb{C}$ and is called a \textbf{cut} module if $supp(V)\subset \lambda+\gamma+\Gamma^{(\alpha)}_{\leq 0}$ where $\Gamma^{(\alpha)}_{\leq 0}:=\{\mu\cdot\beta|\beta\in \mathbb{Z}^n \hbox{ and }\beta.\alpha\leq 0\} \hbox { and } \gamma\in\Gamma_\mu$.
\end{defi}
The modules $T_{\mu}(a,b,F)$ are irreducible \textbf{dense} modules and $L(a,b,\Gamma_{\mu'})$ are irreducible \textbf{cut} modules.

The following theorem is a consequence of Theorem 15 and Theorem 16 in \cite{LiuGuo}. It classifies Harish-chandra modules of $\mathbf{HVir}(n)_{\mu}$.

\begin{thm} \label{ClssThm}
Let  $V$ be a nontrivial irreducible weight module with finite dimensional
weight spaces over the Heisenberg solenoidal-Virasoro algebra $\mathbf{HVir}(n)_{\mu}$.
\begin{itemize}
\item[1)] If $n=1$ then $\Gamma_{\mu}=\mu\mathbb{Z}\simeq\mathbb{Z}$, then $V$ is  of intermediate series or highest or lowest module (see \cite{LiuJiang,LuZhao}).
\item[2)] If $n\geq 2$, then $V$ is isomorphic to one of the following modules:
\begin{itemize}
\item[a)]$V\cong T_{\mu}(a,b,F)$ for $(a,b)\in \mathbb{C}^2\setminus\{(0,0)\}$ or $V\cong \overline{T}_{\mu}(0,0,0)$.
\item[b)] $V\cong L(a,b,\Gamma_{\mu'})$ for some $a,b\in \mathbb{C}$.
\end{itemize}

\end{itemize}
\end{thm}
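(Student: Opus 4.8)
The plan is to deduce the classification from the general theory of rank $n$ Heisenberg-Virasoro algebras developed by G. Liu and X. Guo in \cite{LiuGuo}, once we check that $\mathbf{HVir}(n)_{\mu}$ belongs to that family. Since $\mu$ is generic, the map $\sigma_{\mu}:\mathbb{Z}^n\to\mathbb{C}$ is injective, so $\Gamma_{\mu}$ is a free abelian group of rank $n$; comparing the brackets (\ref{HSvir0})--(\ref{HSvir3}) with the defining relations of $\mathbf{HVir}[G]$ in \cite{LiuGuo} shows that $\mathbf{HVir}(n)_{\mu}$ is an instance of $\mathbf{HVir}[G]$ with $G=\Gamma_{\mu}\simeq\mathbb{Z}^n$, the element $h_0$ playing the role of the distinguished generator acting by the scalar $F$, and Theorem \ref{Ext2} guaranteeing that the three central directions and their cocycles are precisely the ones appearing in \cite{LiuGuo}. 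For $n=1$ we have $\Gamma_{\mu}=\mu\mathbb{Z}\simeq\mathbb{Z}$, a suitable rescaling identifies $\mathbf{HVir}(1)_{\mu}$ with the twisted Heisenberg-Virasoro algebra, and statement 1) is then the theorem of R. L\"{u} and K. Zhao \cite{LuZhao} (see also \cite{LiuJiang}).

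For $n\geq 2$, I would first invoke Theorem 16 of \cite{LiuGuo}: a nontrivial irreducible weight $\mathbf{HVir}(n)_{\mu}$-module $V$ with finite-dimensional weight spaces is either cuspidal or a generalized highest weight module. If $V$ is cuspidal, Theorem 15 of \cite{LiuGuo} forces all weight spaces of $V$ to be one-dimensional, so $V$ is a module of the intermediate series supported in some coset $a+\Gamma_{\mu}$. Restricting to $\mathbf{W}(n)_{\mu}$ and using the classification of cuspidal $\mathbf{W}(n)_{\mu}$-modules of Billig--Futorny \cite{BiFu2} pins down the action of the $e_{\mu\cdot\alpha}$ up to the parameter $b$, and Proposition \ref{IntHVirn} then gives $c_{1}=c_{2}=c_{3}=0$ and a single constant $F$ for the action of the $h_{\alpha}$; hence $V\cong T_{\mu}(a,b,F)$. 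When $T_{\mu}(a,b,F)$ is reducible, i.e.\ $F=0$, $a\in\Gamma_{\mu}$, $b\in\{0,1\}$, its only nontrivial irreducible subquotient is $\overline{T}_{\mu}(0,0,0)$, which gives case 2a).

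If instead $V$ is a generalized highest weight module, I would use the $\mathbb{Z}$-grading $\mathbf{HVir}(n)_{\mu}=\oplus_{i\in\mathbb{Z}}\mathbf{HVir}(n)_{\mu}^{i}$ with $\mathbf{HVir}(n)_{\mu}^{0}\simeq\mathbf{HVir}(n-1)_{\mu'}$ and the associated triangular decomposition. A generalized highest weight vector generates $V$, so by PBW $V$ is a quotient of the generalized Verma module $\widetilde{M}(a,b,\Gamma_{\mu'})$ induced from an $\mathbf{HVir}(n-1)_{\mu'}$-module supported in the top graded piece; by induction on $n$ that top module is of intermediate series, hence $T_{\mu'}(a,b,F)$ with $F=0$ and the $c_{\mu,i}$ acting trivially (Proposition \ref{IntHVirn} again), and irreducibility of $V$ forces $V\cong\widetilde{M}(a,b,\Gamma_{\mu'})/\overline{M}(a,b,\Gamma_{\mu'})=L(a,b,\Gamma_{\mu'})$, case 2b).

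The main obstacle is the cuspidal case: showing that cuspidal $\mathbf{HVir}(n)_{\mu}$-modules have weight multiplicity one and that the three central charges vanish on them. This is exactly what Theorem 15 of \cite{LiuGuo} provides, and it rests in turn on the classification of cuspidal $\mathbf{W}(n)_{\mu}$-modules in \cite{BiFu2} together with a control of how the Heisenberg part $\mathbf{H}(n)_{\mu}$ couples to it. A secondary, more bookkeeping, point is verifying that the cocycle normalization of $\mathbf{HVir}(n)_{\mu}$ matches that of \cite{LiuGuo}, so that Theorems 15 and 16 there apply verbatim; this is where Theorem \ref{Ext2} enters.
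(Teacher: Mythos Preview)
Your approach matches the paper's exactly: the paper does not give an independent proof but simply states the theorem as a consequence of Theorems~15 and~16 of \cite{LiuGuo}, and your proposal is precisely a detailed unpacking of that citation, together with Proposition~\ref{IntHVirn} and the Billig--Futorny cuspidal classification for the $\mathbf{W}(n)_\mu$-part. One small slip: in the generalized highest weight branch you assert that the top $\mathbf{HVir}(n-1)_{\mu'}$-module is $T_{\mu'}(a,b,F)$ \emph{with $F=0$}, but Proposition~\ref{IntHVirn} only forces the $F_{\mu\cdot\alpha,\mu\cdot\kappa}$ to coalesce to a single constant $F$ and the $c_i$ to vanish---it does not force $F=0$; the paper's notation $L(a,b,\Gamma_{\mu'})$ simply suppresses this extra parameter.
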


\section{Simple Weight ${\mathbf{HVir}(n)_\mu}$-modules having infinite dimensional weight spaces}
Let $\displaystyle\mathbb{Z}^{n}$ be the free abelian group of rank $n$ whose elements are sequences of $n$ integers, and operation is the addition. A group order on $\displaystyle \mathbb{Z}^{n}$ is a total order, which is compatible with addition, that is
$$ a<b\quad {\text{ if and only if }}\quad a+c<b+c.$$
The lexicographical order $<_{lex}$ is a group order on $\mathbb{Z}^{n}.$

We transport the lexicographic order $<_{lex}$ on $\mathbb{Z}^{n}$ to $\Gamma_\mu$ that is
$$\mu\cdot\alpha\prec \mu\cdot\beta \hbox{ if and only if } \alpha<_{lex} \beta.$$

Let us introduce
$$\Delta^{+}:=\{\alpha\in\mathbb{Z}^{n}|\overrightarrow{0}<_{lex}\alpha\}~~,~~\Delta^{-}:=\{\alpha\in\mathbb{Z}^{n}|\alpha<_{lex}\overrightarrow{0}\}$$
$$\Gamma_\mu^+:=\sigma_\mu(\Delta^+):=\{\mu\cdot\alpha| \overrightarrow{0}<_{lex}\alpha\}~~,~~\Gamma_\mu^-:=\sigma_\mu(\Delta^-):=\{\mu\cdot\alpha| \alpha<_{lex}\overrightarrow{0}\}$$

Let  $(\textbf{Vir}(n)_{\mu})_{+},(\textbf{Vir}(n)_{\mu})_{-},(\textbf{Vir}(n)_{\mu})_{0}, (\textbf{H}(n)_{\mu})_{+},(\textbf{H}(n)_{\mu})_{-}$ and $(\textbf{H}(n)_{\mu})_{0}$ be the subalgebras defined by:
  $$\begin{array}{cc}(\textbf{Vir}(n)_{\mu})_{+}=\displaystyle\bigoplus_{\alpha\in\Delta^{+}}\mathbb{C}e_{\mu\cdot\alpha}
,~~(\textbf{Vir}(n)_{\mu})_{-}=\displaystyle\bigoplus_{\alpha\in\Delta^{-}}\mathbb{C}e_{\mu\cdot\alpha}, (\textbf{Vir}(n)_{\mu})_{0}=\mathbb{C}d_{\mu}\oplus\mathbb{C}c_{\mu,1},\\
(\textbf{H}(n)_{\mu})_{+}=\displaystyle\bigoplus_{\alpha\in\Delta^{+}}\mathbb{C}h_{\alpha}
,~~(\textbf{H}(n)_{\mu})_{-}=\displaystyle\bigoplus_{\alpha\in\Delta^{-}}\mathbb{C}h_{\alpha},(\textbf{H}(n)_{\mu})_{0}=\mathbb{C}h_{0}\oplus\mathbb{C}c_{\mu,2}\end{array}.$$

The algebra ${\mathbf{HVir}(n)_{\mu}}$ has the following triangular decomposition: $$\mathbf{HVir}(n)_{\mu} = (\mathbf{HVir}(n)_{\mu})_{+}\oplus(\mathbf{HVir}(n)_{\mu})_{0}\oplus (\mathbf{HVir}(n)_{\mu})_{-}$$ where,
$$(\mathbf{HVir}(n)_{\mu})_{\pm}=(\textbf{H}(n)_{\mu})_{\pm}\oplus(\textbf{Vir}(n)_{\mu})_{\pm},$$
$$(\mathbf{HVir}(n)_{\mu})_{0}=\mathbb{C}e_{\mu\cdot0}\oplus\mathbb{C}h_0\oplus\mathbb{C}c_{\mu,1}\oplus\mathbb{C}c_{\mu,2}\oplus\mathbb{C}c_{\mu,3}.$$

Let  $\lambda=(\lambda_\mu,~c_0~,c_1 ,c_2,c_3)\in \mathbb{C}^5$ and denote $\mathbf{HB}(n)_+:=(\mathbf{HVir}(n)_{\mu})_{0}\oplus(\mathbf{HVir}(n)_{\mu})_{+}$. Let the one dimentional $\mathbf{HB}(n)_+$-module $\mathbb{C}_\lambda$ where the action is given by:
$$e_{\mu\cdot0}.1_\lambda = \lambda_\mu 1_\lambda, h_{0}.1_\lambda =c_01_\lambda, c_{\mu,1}.1_\lambda= c_11_\lambda, c_{\mu,2}.1_\lambda = c_21_\lambda, c_{\mu,3}.1_\lambda = c_31_\lambda.$$

The Verma module of $\mathbf{HVir}(n)_{\mu}$ is the induced weight module: $$M(\lambda)=Ind^{\mathbf{HVir}(n)_{\mu}}_{\mathbf{HB}(n)_+}\mathbb{C}_\lambda:=U(\mathbf{HVir}(n)_{\mu})\otimes_{U(\mathbf{HB}(n)_+)}\mathbb{C}_\lambda$$

 The Verma module $M(\lambda)$ has a maximal proper submodule $\widetilde{M(\lambda)}$ and the quotient $V(\lambda):=M(\lambda)/\widetilde{M(\lambda)}$ will be irreducible and called the irreducible highest module with highest weight $\lambda$. Moreover, every irreducible highest module will be constructed with this manner.

The irreducible lowest weight modules $V(\lambda)^{\vee}$ of lowest weight $\lambda$ are constructed in the same manner of the ones in the case of the $\mathbf{HVir}(n)_\mu$ algebra.

We can also consider the Verma module of $\mathbf{Vir}(n)_\mu$: $$K(\nu):=Ind^{\mathbf{Vir}(n)_{\mu}}_{(\mathbf{Vir}(n)_\mu)_0\oplus(\mathbf{Vir}(n)_\mu)_+}\mathbb{C}_\nu$$ where
 $\nu=(\lambda_{\mu},c_1),e_{\mu\cdot0}.1_\nu= \lambda_\mu 1_\nu, c_{\mu,1}.1_\nu= c_11_\nu$ and $(\mathbf{Vir}(n)_\mu)_+$ acts by $0$.

 The module $K(\nu)$ has a maximal proper submodule $\widetilde{K(\nu)}$ and the quotient $L(\nu):=K(\nu)/\widetilde{K(\nu)}$ is an
 irreducible highest $\mathbf{Vir}(n)_\mu$-module.

The algebra ${\mathbf{HVir}(n)_{\mu}}$ has also  the following generalized triangular decomposition: $$\mathbf{HVir}(n)_{\mu} = (\mathbf{H}(n)_{\mu})_{-}\oplus\mathbf{Vir}(n)_{\mu}\oplus(\mathbf{H}(n)_{\mu})_{0}\oplus\mathbb{C} c_{\mu,3}\oplus(\mathbf{H}(n)_{\mu})_{+}.$$

Let $(\mathbf{P}(n)_{\mu})_+:=\mathbf{Vir}(n)_{\mu}\oplus(\textbf{H}(n)_{\mu})_{0}\oplus(\mathbf{H}(n)_{\mu})_{+}\oplus\mathbb{C}c_{\mu,3}$.
Let $L(\nu)$ be an irreducible $\mathbf{Vir}(n)_{\mu}$-module. Extend it to $(\mathbf{P}(n)_{\mu})_+$-module by letting $h_0$ acts by $c_0$, $c_{\mu,2}$ acts by $c_2$, $c_{\mu,3}$ acts by $c_3$ and $(\mathbf{H}(n)_{\mu})_{+}$ acts by $0$.
Let the generalized Verma module of ${\mathbf{HVir}(n)_{\mu}}$:
$$G(\lambda)=Ind^{\mathbf{HVir}(n)_{\mu}}_{(\mathbf{P}(n)_{\mu})_+}L(\nu)$$ where $\lambda=(\nu, c_0,c_2,c_3).$
 The module $G(\lambda)$ has a maximal submodule $\widetilde{G(\lambda)}$ and the quotient is irreducible module $V(\lambda)$.
 As a module of $\mathbf{Vir}(n)_{\mu}$ it contains $L(\nu)$ as a submodule.

\begin{thm} Let $V(\lambda)$  be the  irreducible highest weight module of $\mathbf{HVir}(n)_\mu$, then there exists $\alpha\in supp(V(\lambda))$ such that $V(\lambda)_\alpha$ is an infinite dimensional  weight subspace of $V(\lambda)$.

We have the same assertion for the lowest weight module $V(\lambda)^{\vee}$.
\end{thm}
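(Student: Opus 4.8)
The plan is to prove the statement for $n\geq2$ (for $n=1$ one has $\Gamma_\mu\simeq\mathbb Z$, the order $\prec$ restricts to a well order on $\Delta^+$, and $V(\lambda)$ has finite dimensional weight spaces, as in the classical twisted Heisenberg--Virasoro algebra). The idea is to realise $V(\lambda)$ as the irreducible quotient of the generalized Verma module $G(\lambda)$ and then transport infinite weight multiplicity from the $\mathbf{Vir}(n)_\mu$-submodule $L(\nu)$.

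First I would check that the two descriptions of $V(\lambda)$ agree. The highest weight vector $v$ of $L(\nu)$ is annihilated by $(\mathbf{Vir}(n)_\mu)_+$ and, by construction of $G(\lambda)$, by $(\mathbf H(n)_\mu)_+$; since $(\mathbf{HVir}(n)_\mu)_+=(\mathbf{Vir}(n)_\mu)_+\oplus(\mathbf H(n)_\mu)_+$, it is a highest weight vector of $G(\lambda)$ on which $e_{\mu\cdot0},h_0,c_{\mu,1},c_{\mu,2},c_{\mu,3}$ act by $\lambda_\mu,c_0,c_1,c_2,c_3$. Hence $G(\lambda)$ is a highest weight module of highest weight $\lambda$, a quotient of $M(\lambda)$, and $G(\lambda)/\widetilde{G(\lambda)}=V(\lambda)$. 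Next, from the generalized triangular decomposition $\mathbf{HVir}(n)_\mu=(\mathbf H(n)_\mu)_-\oplus(\mathbf P(n)_\mu)_+$ and PBW one gets $G(\lambda)\cong U((\mathbf H(n)_\mu)_-)\otimes_{\mathbb C}L(\nu)$ as a vector space, where $(\mathbf H(n)_\mu)_-$ is abelian (the bracket $[h_\alpha,h_\beta]_{HVir_\mu}$ is supported on $\beta=-\alpha$, impossible for $\alpha,\beta\in\Delta^-$ since $\prec$ is a group order). Because $\prec$ is a group order with $0\notin\Delta^-$, and $\sigma_\mu\colon\mathbb Z^n\to\Gamma_\mu$ is a bijection, the only weight-$\lambda_\mu$ vectors of $G(\lambda)$ are the multiples of $v$, so $V(\lambda)_{\lambda_\mu}=\mathbb C\overline v$ is one dimensional with $\overline v\neq0$.

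The core step is then to produce an infinite dimensional weight space. As a $\mathbf{Vir}(n)_\mu$-module $V(\lambda)$ contains $L(\nu)$ (as noted before the theorem), and by the construction of infinite weight multiplicity $\mathbf{Vir}(n)_\mu$-modules in \cite{AM} (via the lexicographic order on $\mathbb Z^n$ for $n\geq2$) the module $L(\nu)$ has a weight space $L(\nu)_\alpha$ of infinite dimension for some $\alpha$; since $d_\mu=e_{\mu\cdot0}$ we get $L(\nu)_\alpha\subseteq V(\lambda)_\alpha$ and hence $\dim V(\lambda)_\alpha=\infty$, $\alpha\in\mathrm{supp}(V(\lambda))$. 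The mechanism behind this is the failure of $\prec$ to be a well order: for $k\geq1$ set $f_k=(-1,k,0,\dots,0)\in\Delta^-$; then the vectors $h_{f_k}h_{f_{-k}}\otimes v$ (equivalently $e_{\mu\cdot f_k}e_{\mu\cdot f_{-k}}\cdot1_\nu$) all have weight $\lambda_\mu-2\mu_1$ and are linearly independent in $G(\lambda)$. Applying the conjugate raising operators $h_{-f_{-k}}h_{-f_k}$ (resp.\ $e_{\mu\cdot(-f_{-k})}e_{\mu\cdot(-f_k)}$), which lie in $(\mathbf P(n)_\mu)_+$, the commutation relations send the $k'$-th vector into $V(\lambda)_{\lambda_\mu}=\mathbb C\overline v$, to a nonzero multiple of $\overline v$ when $k'=k$ and to $0$ otherwise (nonvanishing of the scalar uses genericity of $\mu$, via $c_{\mu,3}\neq0$ in the Heisenberg picture or the $\lambda_\mu$/$c_{\mu,1}$ terms in the Virasoro picture). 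Since $\widetilde{G(\lambda)}$ is proper it meets $\mathbb C\overline v$ trivially, so a finite linear dependence among the $h_{f_k}h_{f_{-k}}\otimes v$ in $V(\lambda)$ would, after applying one such operator, force $\overline v\in\widetilde{G(\lambda)}$, a contradiction; hence $\dim V(\lambda)_{\lambda_\mu-2\mu_1}=\infty$.

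Finally, the lowest weight assertion for $V(\lambda)^\vee$ follows by the symmetric argument (exchange $\Delta^+\leftrightarrow\Delta^-$, or apply the anti-automorphism $e_{\mu\cdot\alpha}\mapsto e_{-\mu\cdot\alpha}$, $h_\alpha\mapsto h_{-\alpha}$). I expect the main obstacle to be this last step: keeping enough control of $\widetilde{G(\lambda)}$ to be sure that the infinitely many candidate weight vectors do not collapse in the irreducible quotient. The trick of pairing each of them with a raising operator that lands in the one dimensional top weight space is what makes this tractable, and the genuinely degenerate cases (no central charge available) are absorbed into the corresponding $\mathbf{Vir}(n)_\mu$ result of \cite{AM}.
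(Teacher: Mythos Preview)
Your proposal is correct and follows essentially the same approach as the paper: embed $L(\nu)$ into $V(\lambda)$ as a $\mathbf{Vir}(n)_\mu$-submodule and invoke the result of \cite{AM} that $L(\nu)$ already has infinite dimensional weight spaces. The paper's proof consists of exactly these three sentences; your write-up supplies the justifications the paper leaves implicit (identifying the two constructions of $V(\lambda)$, checking $\widetilde{G(\lambda)}\cap L(\nu)=0$ via irreducibility of $L(\nu)$) and adds an explicit Heisenberg-side mechanism, but the underlying strategy is the same.
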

\begin{proof}
As a module of  $\mathbf{Vir}(n)_\mu$, $V(\lambda)$ contains $L(\nu)$ as submodule. Using results in \cite{AM}, $L(\nu)$ has infinite dimensional  weight subspaces. We deduce that $V(\lambda)$ has submodules of infinite dimensional weight spaces.
\end{proof}

\end{document}